\newtheorem{thm}{Theorem}
\newtheorem{rk}{Remark}
\newtheorem{prop}{Proposition}
\newtheorem{ex}{Example}
\newtheorem{lemma}{Lemma}
\newtheorem{defi}{Definition}
\newcommand{\pf}{{\flushleft{\bf Proof: }}}
\newcommand{\R}{{\mathbb{R}}}
\newcommand{\Z}{{\mathbb{Z}}}
\newcommand{\T}{{\mathbb{T}}}
\begin{document}
\title {Branched coverings of the sphere having
a completely invariant continuum with infinitely many Wada Lakes.}
\author{J.Iglesias, A.Portela, A.Rovella and J.Xavier}

\begin{abstract}
We construct a family of smooth branched coverings of degree $2$ of the sphere $S^2$ having a
completely invariant indecomposable continuum $K$ and infinitely many Wada Lakes.
\end{abstract}

\maketitle

\section{Introduction}
This note deals with branched coverings of the sphere of degree $d$, with $|d|>1$; an example will be given having an {\em indecomposable} completely invariant continuum. There are
 two main sources of interest:\\

1.  Whether or not there exists a rational function with an
indecomposable continuum as its Julia set is a well-known unsolved
problem (see \cite{cmr}).  It is known that the pseudo-circle is its own $d$-fold covering space ($|d|>1$) (see \cite{heath} and \cite{gam}). It follows that there exist self maps of the 
annulus of degree $d$, $|d|>1$,  with an indecomposable continuum carrying a $d$ to $1$ dynamics. Of course this example can be extended to $S^2$, so we are not presenting
the first example of a degree $d$,  $|d|>1$,  map of the sphere with a $d:1$ invariant indecomposable continuum.  However, to our knowledge this is the first example of a 
{\it branched covering} of the sphere of degree $d$, with $|d|>1$, having an indecomposable completely invariant continuum. Furthermore, as will
be explained below, the nature of our example is essentially different.\\

2. The following stands as an open problem:  let $f: S^2 \to S^2$ be a continuous map of degree $d$, $|d|>1$, and let $N_nf$ denote the number of fixed points of $f^n$. When does
the growth rate inequality $\limsup \frac{1}{n} \log N_nf\geq \log |d|$ hold for $f$? (This is Problem 3 posed in \cite{shub2}). There are many
recent papers written on the subject. For example the following result holds: Assume that $f$ is a degree $d$ ($|d|>1$) branched covering of the sphere with a completely invariant
simply connected region $R$ whose boundary component is locally connected. If the set of critical points in the boundary of $R$ is not reduced to a fixed point with multiplicity $d-1$,
then $f$ satisfies the growth
rate inequality. This result appears in \cite{iprx4}, where many other references on this problem can be found. The hypothesis of local connectivity does not seem to be relevant here,
and it is plausible that this hypothesis should be replaced by decomposability instead.  It is then natural to try and understand the examples presenting indecomposable totally invariant
continua.\\

We prove the following:

\begin{thm}
\label{t1}
There exists a branched covering $f:S^2\to S^2$ supporting a completely invariant continuum $K$ and satisfying the following
properties:
\begin{enumerate}
\item
$f$ is smooth.
\item
$f$ has two attracting fixed points, each basin contains one of the two critical points of $f$. The restriction of $f$ to the immediate basins is injective.
\item
The compact set $K$ is the complementary set of the union of the basins: $K$ is a repellor with local product structure.
\item
$K$ is indecomposable, with infinitely many Wada lakes.
\item
$f$ satisfies the growth rate inequality.
\item
Every $C^1$ perturbation of $f$ satisfies properties $2, 4$ and $5$ above. Every branched covering which is also a small $c^\infty$ perturbation of satisfies all the properties above.\\

\end{enumerate}
\end{thm}

\begin{rk}

The example obtained by extending Gammon's $d$-fold covering of the pseudo-circle in \cite{gam} is of a very different nature: the indecomposable continuum is an essential pseudo-
circle contained in an annulus (thus extendable to a self map of the sphere with just two complementary components).  It is not a covering map and it is not $C^1$. It satisfies the growth rate inequality
as it follows from \cite{boronski} (see also \cite{b2} for a stronger result in this direction).\\\end{rk}

The construction of our example begins with a nonexpanding Anosov endomorphism of the torus; after performing the derived from Anosov perturbation to transform both fixed points into
attractors, the new map is carried to the sphere under a four points ramification branched covering. Then some arranges are done to obtain smoothness and the remaining properties.\\

There is another possible approach: the linear Anosov can be first carried to the sphere under the branched covering; then, imitating the construction for diffeomorphism of the Plykin
attractor (in our case, it would be a repellor), and a derived from pseudo-Anosov perturbation, the same example is obtained (see \cite{bm} where the derived from pseudo-Anosov is
performed).\\

The derived from Anosov endomorphism was first introduced by F.Przytycki, see section 6 in \cite{prz}.\\

In the next section the properties of indecomposable continua are analyzed and a criterium for a laminated continuum to be indecomposable is presented. Section 3 contains the construction
of our example and the last section explores general properties of other examples that are quotients of torus endomorphisms.

\section{Indecomposable Continua.}
In this section we present the definition and properties of indecomposable continua. For details see section 3.8 of \cite{hy}.

\begin{defi}
A continuum is a compact connected set.
A continuum is decomposable if it is the union of two proper subcontinua. Otherwise, it is indecomposable.
\end{defi}

A Wada lake of an indecomposable continuum $K$ contained in the sphere is associated to a prime end whose principal set is equal to the whole $K$.
For our purposes, a Wada lake is just a component of the complement of an indecomposable continuum $K$ which is dense in $K$.\\

\noindent
If $K$ is a proper subcontinuum of $X$ and has nonempty interior, then $X$ is decomposable, because it may happen that $X\setminus K$ is connected, in which case the pair $\{K,\overline{X\setminus K}\}$ is a decomposition of $X$ into two proper subcontinua, or it may happen it is not connected; and in this case, if $\{A,B\}$ is an open partition of $X\setminus K$, then $K\cup A$ and $K\cup B$ are connected and closed in $X$ so $X$ is decomposable. This proves one implication of the following proposition, the other is tivial.

\begin{prop}
\label{p1}
A continuum is decomposable if and only if it contains a proper subcontinuum with nonempty interior.
\end{prop}

\begin{ex}
\label{e1}
Theorem \ref{indec} below implies that the solenoid, the attractor of a derived from Anosov homeomorphism of the torus or the Plykin attractor are examples of indecomposable continua. With minor modification also the stable manifold of a horseshoe is seen to be indecomposable.
\end{ex}

It is assumed from now on in this section that $X$ is a continuum and every $x\in X$ has a neighborhood $U$ which is homeomorphic to the product of the usual Cantor set $C$ and an
open interval $I$. To avoid unnecessary notations we will drop the homeomorphism between the neighborhood and $C\times I$.\\

The {\em local leaf} of a point $\bar x=(x,a)$ relative to its neighborhood $C\times I$ is defined as the set $\{x\}\times I$. A {\em segment} of $X$ is a connected union of a finite number of local leaves.
The {\em leaf at} $x$, denoted $H_x$, is the union of the segments containing $x$.  Two leaves are disjoint or equal, because if $z$ belongs to $H_p\cap H_q$ then the local leaf of
$z$ is contained in both $H_p$ and $H_q$.\\

Let $p\in X$; define a local parametrization of $H_p$ as a pair $(\varphi,J)$ where $J$ is an finite open interval in the real line, and $\varphi$ an immersion from $J$ into $X$ whose image is contained in $H_p$.
Note that if $(\varphi_0,J_0)$ and $(\varphi_1,J_1)$ are local parametrizations of $H_p$ and $\varphi_0(J_0)\cap \varphi_1(J_1)\neq \emptyset$, then there exists a local parametrization $(\varphi,J)$ extending $(\varphi_0,J_0)$ (meaning $J_0\subset J$ and $\varphi|_{J_{0}}=\varphi_0$).
An argument of maximality implies that there exists an immersion onto $\varphi:\R\to H_p$. \\

The following assertion is proved using the definition of leaf and the local structure. It will referred below as ``continuity of leaves".
A section at $p$ is a set $\Sigma_p$ containing $p$ and having the form $C\times \{a\}$, where $U=C\times I$ is a neighborhood of $p$.
Assume now that $q\in H_p$ and that $\Sigma_p$ and $\Sigma_q$ are corresponding sections; let also $L$ be a continuum contained in $H_p$
containing $p$ and $q$, and let $W$ be a neighborhood of $L$. Then there exists a neighborhood $V$ of $p$ in $\Sigma_p$ such that, for every $y\in V$ the leaf $H_y$ contains a continuum $L_y$ contained in $W$ and intersecting $\Sigma_q$.\\

This section finishes with the proof of the following theorem, that gives a sufficient condition for a continuum to be indecomposable.

\begin{thm}
\label{indec}
If $X$ is locally the product of a Cantor set times an interval, and there exist a dense leaf, then $X$ is indecomposable.
\end{thm}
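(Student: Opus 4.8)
The plan is to prove the contrapositive form suggested by property (2): I will show that every proper subcontinuum of $X$ has empty interior, and then deduce indecomposability from the Baire category theorem. Indeed, if $X$ were decomposable, say $X=A\cup B$ with $A,B$ proper subcontinua, then since $X$ is a nonempty compact metric space it is a Baire space, so $A$ and $B$ cannot both be nowhere dense; one of them, say $A$, would be a proper subcontinuum with nonempty interior. Thus it suffices to derive a contradiction from the existence of a proper subcontinuum $K\subsetneq X$ with $\mathrm{int}(K)\neq\emptyset$.

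First I would localize the interior. Choosing $\bar p\in\mathrm{int}(K)$ and a chart $U\cong C\times I$ about $\bar p$, the open set $\mathrm{int}(K)$ contains a basic \emph{slab} $S=W\times J$, where $W$ is a nonempty clopen subset of the Cantor factor $C$ and $J\subseteq I$ is an open subinterval, with $S\subseteq K$. The decisive structural remark is that a slab is transversally totally disconnected: its connected components are exactly the arcs $\{z\}\times J$, $z\in W$. Since $K$ is connected yet contains this family of pairwise disjoint arcs, the components must be joined to one another \emph{inside} $K$, and, because the local transversals of $X$ are Cantor sets, hence totally disconnected, the only available one-dimensional connections run along leaves. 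This forces $K$ to contain large pieces of leaves, and it is here that I bring in the hypothesis: the dense leaf $H$ meets the open set $\mathrm{int}(K)$, so I may fix a point $\bar w\in H\cap\mathrm{int}(K)$.

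The core of the argument is a \emph{propagation step}: I want to show that $\mathrm{int}(K)$ is not only open but also closed, so that connectedness of $X$ forces $\mathrm{int}(K)=X$ and hence $K=\overline{\mathrm{int}(K)}=X$, contradicting properness. Equivalently, I would show that the whole dense leaf $H$ lies in $\overline{\mathrm{int}(K)}$; since $\overline{H}=X$ and $K$ is closed, this again yields $K=X$. To propagate, I would follow $H$ out of the slab $S$ through a chain of charts, using in each chart that the interval factor is fixed and that $K$ captures full clopen slabs over the transversal positions it already reaches, so that the transversal trace of $K$, namely the set of Cantor coordinates $z$ for which the local leaf $\{z\}\times I$ is contained in $K$, is enlarged at each step and is compatible across chart overlaps.

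The main obstacle is precisely this propagation, and it is a genuine one: interiority can \emph{degenerate} at limits. Concretely, a sequence $\bar z_n\to\bar z$ with $\bar z_n\in\mathrm{int}(K)$ only gives $\bar z\in\overline{\mathrm{int}(K)}\subseteq K$, and the clopen transversal width $W_n$ and the interval width $J_n$ of the slabs around $\bar z_n$ may shrink to a point, leaving $\bar z$ on $\partial K$; thus the naive ``open and closed'' argument for $\mathrm{int}(K)$, or for the trace set, fails at its closedness half. I expect to overcome this by replacing the pointwise limit with a compactness argument: cover the closure of the relevant segment of $H$ by finitely many product charts, in each of which local leaves have a fixed interval length, and use the connectedness of $K$ together with the density of $H$ to produce a \emph{uniform} lower bound on the transversal width of the slabs contained in $K$ along $H$. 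With such uniformity the trace set becomes clopen in the Cantor factor of each chart and propagates without degeneration, forcing $K$ to contain full local leaves over the entire transversal and hence $\mathrm{int}(K)$ to be dense. Establishing this uniform width, that is, the interplay between the connectedness of the single dense leaf and the totally disconnected transversal, is the step I expect to be the most delicate.
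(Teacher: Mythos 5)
Your opening reduction is correct, and in fact needs less than you use: if $X=A\cup B$ with $A,B$ proper subcontinua, then $\mathrm{int}(A)\supseteq X\setminus B\neq\emptyset$ directly, so it does suffice to show that no proper subcontinuum $K$ has nonempty interior. The problem is that everything after that is a plan rather than a proof, and the plan fails exactly where you yourself flag it. Note first that you may assume $H\not\subseteq K$ (otherwise $K\supseteq\overline{H}=X$ and you are done); then $H$, being connected and meeting both $\mathrm{int}(K)$ and $X\setminus K$, must meet $\partial K$. Now observe what your ``uniform transversal width along $H$'' would give: if every point of $H\cap\mathrm{int}(K)$ carried a slab in $K$ of size bounded below, then no limit of points of $H\cap\mathrm{int}(K)$ could lie on $\partial K$, so $H\cap\mathrm{int}(K)$ would be clopen in $H$ and hence $H\subseteq\mathrm{int}(K)$, i.e.\ $K=X$. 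In other words, the uniform bound is not an auxiliary lemma obtainable by compactness; it is logically equivalent to the conclusion you are trying to reach. The compactness argument you sketch (cover a segment of $H$ by finitely many product charts) only yields uniformity over compact arcs of $H$ that already lie inside the open set $\mathrm{int}(K)$ -- that is a trivial Lebesgue-number statement -- and the propagation it supports halts precisely at $\partial K$, which is where the contradiction would have to be produced. You have no mechanism for crossing $\partial K$, and that crossing is the entire content of the theorem.

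It is instructive to compare with the paper's proof, which never tries to propagate interiors and supplies exactly the two devices your sketch is missing. First, a double Baire-category argument upgrades the hypothesis: from one dense leaf one extracts a point $z$ such that \emph{both} components of $H_z\setminus\{z\}$ are dense (your use of the dense leaf -- merely that it enters $\mathrm{int}(K)$ -- is strictly weaker and is not enough). Second, given any $p\notin H_z$ and any proper compact set $K'$ containing $z$ and $p$, the paper takes a box $C'\times I$ inside a chart disjoint from $K'$ and considers the maximal arcs of leaves in $X\setminus(C'\times I)$: density of both half-leaves of $z$ guarantees the arc through $z$ has both endpoints on the box, and continuity of leaves together with a choice of transversal window $[a_1,a_2]$ whose endpoints are gap endpoints of the Cantor set makes the union $W$ of the arcs through that window relatively clopen in $X\setminus(C'\times I)$, containing $z$ and (after shrinking the window) avoiding $p$. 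Hence $K'$ is disconnected, so the compos of $z$ is proper, and indecomposability follows from property (4) of continua. If you want to salvage your route via property (2), you would still need both of these ingredients -- the Baire upgrade to dense half-leaves and the clopen union-of-arcs construction -- at which point you would essentially be reproducing the paper's argument.
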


\pf
Assume that $H_z$ is dense. \\
{\em Step 1.} For every $x$ in a residual subset $X'$ of $X$ the leaf $H_x$ is dense.\\
The proof is standard: for each nonempty open set $U\subset X$ define the set $X(U)=\{x\in X: H_x\cap U\neq\emptyset\}$. By continuity of leaves, the set $X(U)$ is open; it is
also dense because $H_z$ is dense. If $\{U_n\}_{n\in \Z}$ is a countable basis of open sets, then by Baire Theorem $X':=\cap_n X(U_n)$ is a residual set, and for every $x\in X'$ the
leaf $H_x$ is dense.\\
\begin{figure}[ht]
\begin{center}
\caption{\label{phia2}}
\psfrag{sigmap}{$\Sigma_{p}$}
\psfrag{sigmaq}{$\Sigma_{q}$}
\psfrag{v}{$V^{'}$}
\psfrag{p}{$\varphi_x(t)$}
\psfrag{z}{$\varphi_x(t^{'})=z$}
\psfrag{u}{$U$}
\psfrag{w}{$W$}
\psfrag{ii}{$+\infty$}
\psfrag{x}{$x$}
\includegraphics[scale=.2]{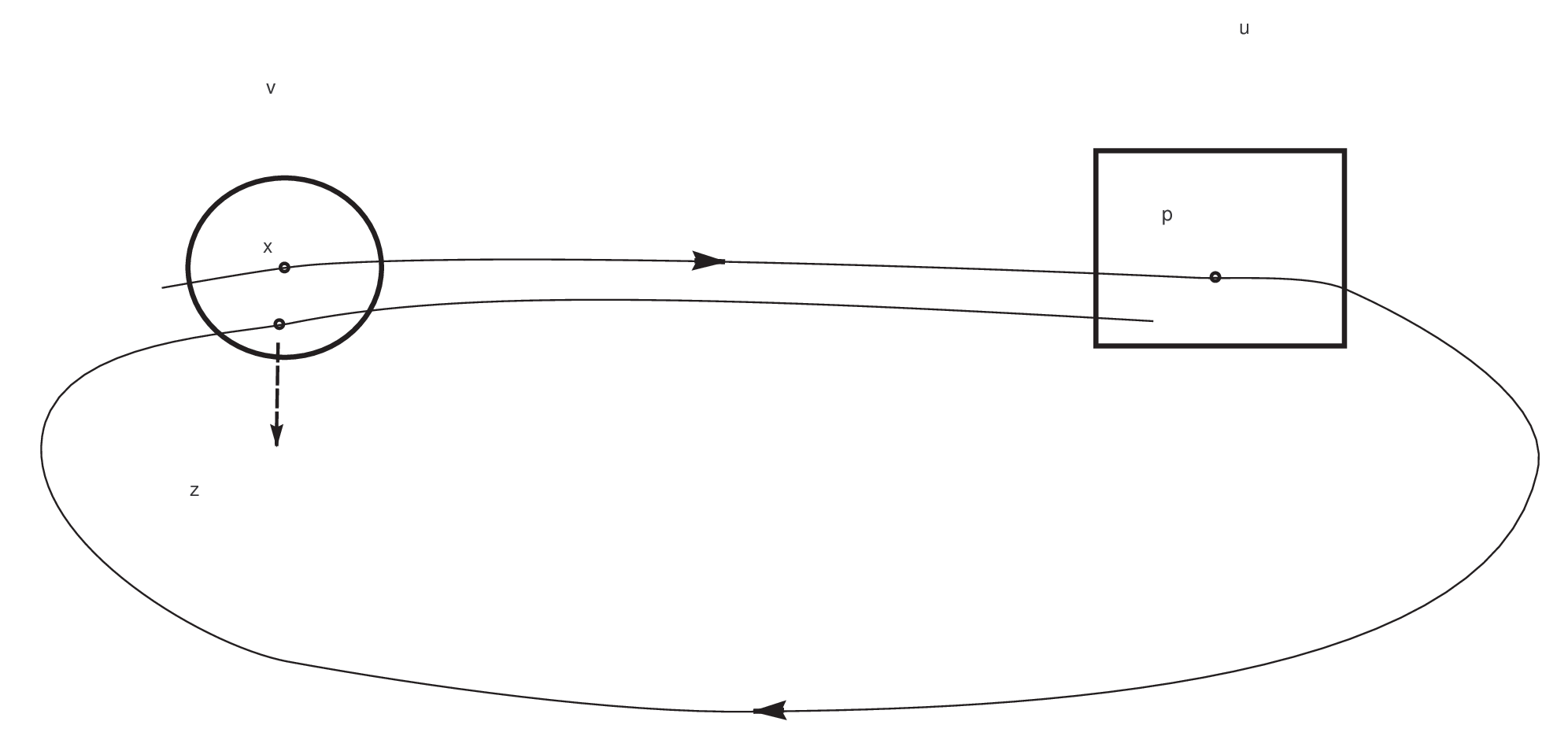}
\end{center}
\end{figure}
{\em Step 2.} For each nonempty open set $U$, define $X_0(U)$ as the set of points $x$ such that both $\varphi_x(\R^+)$ and $\varphi_x(\R^-)$ intersect $U$. It is claimed that $X_0(U)$ is dense in $X$ for every $U$.\\
Let $V$ be an open set, $x\in X'\cap V$ and $\varphi_x:\R\to H_x$ an immersion. As $H_x$ is dense, assume without loss of generality that there exists $t>0$ such that $\varphi_x(t)\in U$. Taking a smaller $V'\subset V$ if necessary, it follows from continuity of leaves that for every $y\in V'$ there exists a segment contained in $H_y$ and close to $\varphi_x([0,t])$ whose intersection with $U$ is nonempty. it follows that if $\varphi_x(T)$ intersects $V'$ for $T$ arbitrary large negative, then $x\in X_0(U)$ and the assertion follows. Otherwise, as $H_x$ is dense, there must exist $t'>t$ such that $\varphi_x(t')\in V'$. Then it is immediate that $z=\varphi_x(t')$ belongs to $X_0(U)\cap V'$. See figure 1.\\

To prove the Theorem, let $K$ be a compact proper set with nonempy interior. It will be shown that $K$ is not connected implying that $X$ is indecomposable by virtue of proposition \ref{p1}.

As $K$ has interior, then there is some nonempty open set $V\subset K$. As $K$ is not dense there exists some nonempty open set $U=C\times I$ with closure disjoint from $K$. Take $z\in V\cap X_0(U)$ as given in step 2 and a maximal segment $\ell_z$ containing $z$ and contained in $H_z\setminus U$. Choose a point $w\in K\setminus \ell_z$, which is possible because $K$ has nonempty interior. Let $p$ and $q$ be the extreme points of $\ell_z$, both belong to $C\times \{0\}$ or $C\times \{1\}$. Recall that if $A=(x,t)\in C\times I$ then $\Sigma_A=C\times \{t\}$.
As $p$ and $q$ can be seen as contained in $\Sigma_p$ (resp. $\Sigma_q$) there exist closed and open neighborhoods $C_p\subset \Sigma_p$ of $p$ and $C_q\subset \Sigma_q\cap C$ of $q$.
One can assume that these neighborhoods are small enough such that: \\
1. For every point $y\in C_p$ there exists a maximal segment $\ell_y$ close to $\ell_z$ and contained in $H_y\setminus(C\times I)$. \\
2. The point $w\in K\setminus \ell_z$ is not contained in any of the $\ell_y$. See figure 2.

\begin{figure}[ht]
\begin{center}
\caption{\label{phia2}}
\psfrag{sigmap}{$\Sigma_{p}$}
\psfrag{sigmaq}{$\Sigma_{q}$}
\psfrag{v}{$V$}
\psfrag{p}{$p$}
\psfrag{q}{$q$}
\psfrag{u}{$U$}
\psfrag{w}{$W$}
\psfrag{ii}{$+\infty$}
\psfrag{ww}{$w\in K\setminus W$}
\includegraphics[scale=.25]{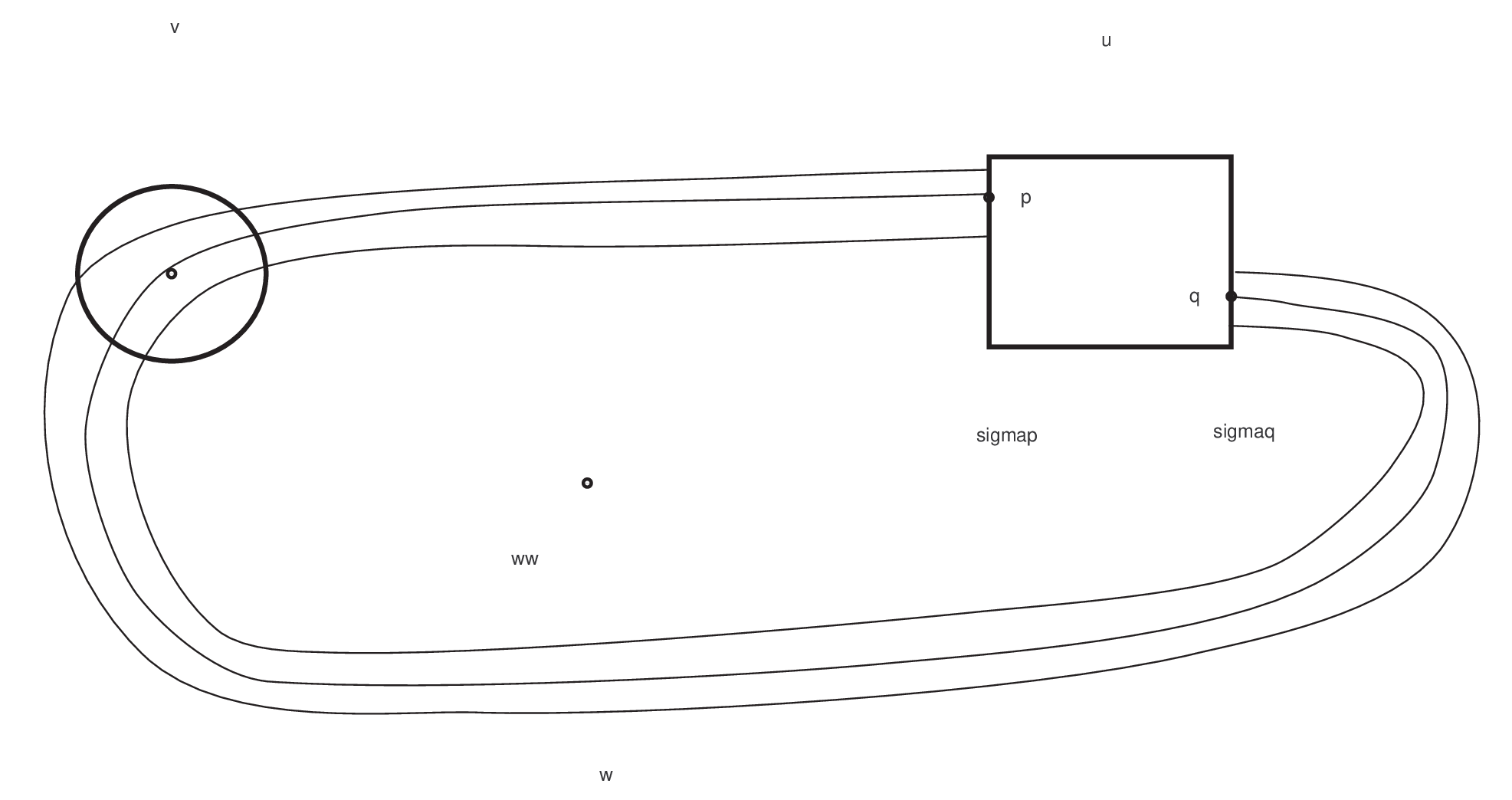}
\end{center}
\end{figure}

Consider the set $W=\cup \{\ell_{y} : y\in C_p\}$. $W$ is closed by continuity of leaves and open in the complement of $\overline U$, by the choice of $C_p$.
Now using that $W$ is closed and open in the complement of $\overline U$, that $K\cap \overline U= \emptyset$ and that $z\in K\cap W$ and $w\in K\setminus W$, it follows that $K$ is not connected, as wished.

\begin{ex}
\label{e2}
Let $\Sigma=2^\Z$ and $\sigma$ the shift. If $X$ is the suspension of $\sigma$ then it is locally Cantor times interval, there are dense leaves and closed leaves. So $X$ is indecomposable. Question: is it true that if $X$ is contained in a surface then the existence of dense leaves implies that every leaf is dense?\\
Let $h$ be a homeomorphism of the interval $[0,1]$ and $C$ the middle thirds Cantor set. Assume that $h$ carries $C\cap [0,1/9]$ to $C\cap [0,1/3]$ and $C\cap[2/9,1]$ in $C\cap [2/3,1]$. If $X$ is the suspension of the restriction of $h$ to $C$ then $X$ is a continuum which is locally the product $C\times I$ and no leaf is dense. $X$ is decomposable.
\end{ex}

\section{An explicit example}
%

In this section we give an example of a $2:1$ branched covering map of the sphere with a $2:1$ invariant indecomposable continuum. We choose a particular example, but for every nonexpanding hyperbolic noninvertible linear endomorphism of the torus the same results hold.\\

We consider the linear hyperbolic toral endomorphism $\overline A$ induced by the matrix $$A=
  \left[ {\begin{array}{cc}
   4 & 1 \\
   2 & 1 \\
  \end{array} } \right]
$$

The map $\overline A: \T^2\to \T^2$  is a nonexpanding Anosov endomorphism of degree $2$; its dynamics is well known, it has invariant stable and unstable foliations with dense leaves.
It is {\em special} meaning that the unstable space does not depend on the preorbit chosen to define it but only on the point. This is well known to be not structurally stable: generic $C^1$ perturbations do not satisfy this property (see \cite{prz}).

Consider the equivalence relation $x \sim -x$, $x\in \T^2=\R^ 2/\Z^2$. $T^2/\sim$ is the product $[0,1/2]\times[0,1]$ with the following identifications:\\
$(x,0)\sim (x,1)$ for every $x$; $(0,y)\sim (0,1-y)$ and $(1/2,y)\sim (1/2,1-y)$ for every $y$.\\
Note that $\T^2/\sim$ is topologically a sphere and that the quotient map $\pi: \T^2\to S^2$ is a degree $2$ branched covering with exactly four critical points; namely, the classes in $\T^2$ of the points
$(1/2, 1/2),(1,0),(1/2,0),(0,1/2)$.\\

Note that $\overline A$ induces a map on the sphere simply because it is linear, but we will previously proceed to perform a derived from Anosov modification. The fixed points of
$\overline A$ are $(0,0)$ and $(1/2,1/2)$. Both can be made attractors, creating as usual two new saddles. This proceeding is well known and taking an extra care note that the
modification can made symmetric with respect to the strong stable manifold of the fixed points. It then follows that the new map $A'$ obtained still preserves the equivalent relation
$\sim$ in $T^2$. As it also well known, the unstable foliation remains the same.  It follows that $A'$ induces a map on the sphere $f'$ so that $\pi A'=f'\pi$. \\

By construction $f'$ is a degree two branched covering map; it has two critical points that can be easily found by computing those points having just one preimage. The critical points are $\pi(1/4,0),\pi(1/4,1/2)$ with corresponding critical values $\pi(0,1/2), \pi(1/2,0)$.
Moreover, the images of the critical values are the fixed points of $f'$ : $f'(\pi(0,1/2)) = \pi(1/2,1/2)$
and $f'(\pi(1/2,0)) = \pi(0,0)$.\\

Finally note that as $A$ is smooth, $f'$ is also smooth excepting exactly at the critical points and critical values where it is not even differentiable. These points belong to the basin of attraction of the fixed points. We will show how to make the critical point $c$ smooth.
In our linear case it can be done easily. Note that the preimage of an adequate ellipse $E$
around $f'(c)$ is a circle $S_\rho(c)$ of radius $\rho$ centered at $c$ such that $f'$ is two-to one from $S_r(c)$ to $E$.
Moreover, lines through $c$ are carried to lines through $f(c)$, which implies that, in polar coordinates (centered at $c$ and $f'(c)$),
one has $f'(r,\theta)=(g(r,\theta),h(\theta))$.
In addition, note that by linearity again, $\frac{g(r,\theta)}{r}$ is bounded and bounded away from $0$.
Let now $\Phi$ be a $C^\infty$ function that is equal to the identity close to $S_\rho(0)$ and is equal to $(r,\theta)\to (e^{-1/r^2},\theta)$ close to $0$ and then $f'\circ\Phi$ is
$C^\infty$ at $c$ and coincides with $f'$ outside a neighborhood of $c$.\\

Next we make a new modification of this map so that the obtained map is also smooth at the critical values (much easier because $f'$ is locally a homeomorphism at these points).
Denote by $f$ this new modified map.\\

\noindent
{\bf Proof of Theorem \ref{t1}.}
(1) It was already established that $f$ is a $C^\infty$ branched covering. \\
(2) Note that the second iterate of the critical points of $f$ are fixed points, so the postcritical set is finite and $f$ is a Thurston map. To  prove the injectivity assertion,
denote by $B_0$ the immediate basin of $(0,0)$. Observe that the fixed point $(0,0)$ of the linear map $A$ has a preimage in the torus, namely the point $(1/2,0)$, which does not
belong to the stable manifold of $(0,0)$; then it will not intersect $B_0$ after the modification. Another topological reason is the following: if two points $x$ and $y$ belong to
$B_0$ and satisfy $f(x)=f(y)$, let $\alpha$ be a curve joining them within the basin and note that $A\alpha$ is an essential curve in the torus, obviously contained in $B_0$, but the basin does not contain
essential curves.\\
(3) Define $K$ as the complementary set of the union of the basins. Recall that the unstable manifolds of the Derived from Anosov are paralel straigth lines. The map $\pi$ carries lines into lines so there are no intersection between different (or the same) unstable manifolds. By a $\lambda$ lemma argument, this means that $K$ can also be seen as the closure of
the stable manifold of any of its saddles, for example, those in the boundary of the immediate basins. It follows that $K$ contains the stable manifold of every point: it is, consequently, a repeller basic piece. It has the same structure of the repeller basic piece of the torus, which is carried by $\pi$ to the sphere. \\
(4) That $K$ is indecomposable follows from Theorem \ref{indec}, because it has the local structure Cantor times interval. For each $n\geq 0$ each component of the preimage under $f^n$ of an immediate basin is a Wada Lake.\\
(5) Immediate from the fact that the  lift $\tilde f:\T^2 \to \T^2$ satisfies the growth rate inequality (see, for example Theorem 1.2 page 618 of \cite{bfgj}).\\
(6) As the basic pieces of a map are $C^1$ stable, it follows immediately that every $C^1$ perturbation of $f$ satisfies properties $(2)$ to $(5)$.
Obviously, to preserve the remaining properties it is necessary that the perturbation is smooth and a branched covering.

\section{Characterizing this family of examples}

Of course there is nothing particular about the matrix $A$ we used to construct the example in the previous section, and we could have used other non-expanding Anosov endomorphism.
However, the fact that the map $F:S^2 \to S^2$ we constructed in the previous section lifts to the covering $(\T^2, \pi)$ impose some restrictions on the kind of examples that can be
created this way.  We explore some of them in this final section in a self-contained way.\\

We say that $f:S^2\to S^2$ is a {\it Latt\`es map} if there exists a non-invertible covering map $\tilde f:\T^2 \to \T^2$ such that $\pi\tilde f = f\pi$ (the term Latt\'es map is
often used with a different meaning, but we adopt it here or convenience).

\begin{lemma}\label{ram} Let $f$ be a Latt\`es map.  Then, the critical values of $f$ are contained in the critical values of $\pi$: $f(Crit (f))\subset \pi (Crit (\pi))$.

\end{lemma}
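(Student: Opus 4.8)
The plan is to read off the critical behaviour of $f$ from the functional equation $\pi\tilde f=f\pi$ by comparing local degrees. For a branched covering $g$ between surfaces and a point $x$, let $\deg_x(g)\geq 1$ denote its local degree (ramification index), so that $x\in\mathrm{Crit}(g)$ exactly when $\deg_x(g)\geq 2$, and recall that the local degree is multiplicative under composition: $\deg_x(g\circ h)=\deg_{h(x)}(g)\cdot\deg_x(h)$. The key structural input is that $\tilde f$, being a genuine covering map, is unramified, so $\deg_{\tilde x}(\tilde f)=1$ for every $\tilde x\in\T^2$.

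First I would evaluate the local degree of both sides of $\pi\tilde f=f\pi$ at an arbitrary $\tilde x\in\T^2$. Multiplicativity gives $\deg_{\tilde f(\tilde x)}(\pi)\cdot\deg_{\tilde x}(\tilde f)=\deg_{\pi(\tilde x)}(f)\cdot\deg_{\tilde x}(\pi)$, and since $\deg_{\tilde x}(\tilde f)=1$ this simplifies to
\[
\deg_{\tilde f(\tilde x)}(\pi)=\deg_{\pi(\tilde x)}(f)\cdot\deg_{\tilde x}(\pi).
\]
Now take any $x\in\mathrm{Crit}(f)$ and choose a preimage $\tilde x\in\T^2$ with $\pi(\tilde x)=x$ (possible since $\pi$ is surjective). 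Then $\deg_{\pi(\tilde x)}(f)=\deg_x(f)\geq 2$, while $\deg_{\tilde x}(\pi)\geq 1$, so the displayed identity forces $\deg_{\tilde f(\tilde x)}(\pi)\geq 2$; that is, $\tilde f(\tilde x)\in\mathrm{Crit}(\pi)$. Applying the functional equation once more, $f(x)=f(\pi(\tilde x))=\pi(\tilde f(\tilde x))\in\pi(\mathrm{Crit}(\pi))$. Since $x$ was an arbitrary critical point, this yields $f(\mathrm{Crit}(f))\subset\pi(\mathrm{Crit}(\pi))$, as claimed.

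The only delicate point — and the sole place the argument could break — is the vanishing of the ramification of $\tilde f$, which collapses the left-hand side of the degree identity to the single factor $\deg_{\tilde f(\tilde x)}(\pi)$. This is exactly what the word \emph{covering map} in the definition of a Latt\`es map guarantees, and it also holds for the linear model $\overline A$, which is a local diffeomorphism. Were $\tilde f$ allowed to ramify, the conclusion could genuinely fail: a factor $\deg_{\tilde x}(\tilde f)\geq 2$ could account for a critical point $x=\pi(\tilde x)$ of $f$ without forcing $\tilde f(\tilde x)$ to be a branch point of $\pi$, so that $f(x)$ would not be a critical value of $\pi$. I would therefore flag the unbranched hypothesis explicitly at the start of the proof.
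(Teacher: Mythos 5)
Your proof is correct, but it follows a genuinely different route from the paper's. The paper argues globally and in contrapositive form, by counting fibers: if $x\notin \pi(\mathrm{Crit}(\pi))$ then $\#\pi^{-1}(x)=2$, hence $\#\tilde f^{-1}(\pi^{-1}(x))=2d$ because $\tilde f$ is an unbranched covering of degree $d$; since $\pi$ is at most $2:1$ and $\pi(\tilde f^{-1}(\pi^{-1}(x)))=f^{-1}(x)$, the fiber $f^{-1}(x)$ has at least $d$ points, so $x$ cannot be a critical value of the degree-$d$ map $f$. You instead work locally and directly: applying multiplicativity of the ramification index to $\pi\tilde f=f\pi$ and using $\deg_{\tilde x}(\tilde f)=1$, you obtain the identity $\deg_{\tilde f(\tilde x)}(\pi)=\deg_{\pi(\tilde x)}(f)\cdot\deg_{\tilde x}(\pi)$ and read off the conclusion at any critical point of $f$. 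Both arguments hinge on exactly the same structural fact, namely that $\tilde f$ is unramified, which you correctly isolate as the delicate point. What your approach buys: your identity does more work than the lemma asks for, since the bound $\deg_{\tilde f(\tilde x)}(\pi)\leq 2$ combined with it immediately yields Lemma \ref{crit} (if $\deg_{\tilde x}(\pi)=2$ then $\deg_{\pi(\tilde x)}(f)=1$) and then Lemma \ref{inv} (for $\tilde x\in\mathrm{Crit}(\pi)$ one gets $\deg_{\tilde f(\tilde x)}(\pi)=1\cdot 2=2$, so $\tilde f(\tilde x)\in\mathrm{Crit}(\pi)$), unifying the paper's three separate ad hoc arguments into one formula. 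What the paper's approach buys: it needs no local degree formalism at all, only cardinality counting and the characterization of critical values of a degree-$d$ branched covering as the points with fewer than $d$ preimages, so it is self-contained at the level of point-set topology.
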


\begin{proof} If $x\in S^2 \backslash \pi (Crit (\pi))$, then $\#\pi^{-1}(x)=2$ and $\#\tilde f ^{-1}(\pi^{-1}(x))=2d$, where $d>1$ is the degree of $\tilde f$.  So,
$\#\pi(\tilde f ^{-1}(\pi^{-1}(x)))= \#f^{-1}(x) \geq d$, and
therefore $x$ is not a critical value.

\end{proof}

\begin{lemma}\label{crit} Let $f$ be a Latt\`es map.  Then, the critical values of $\pi$ are not critical points of $f$: $Crit(f)\cap \pi (Crit (\pi))=\emptyset$.

\end{lemma}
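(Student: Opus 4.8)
The plan is to run the same kind of local bookkeeping used in Lemma \ref{ram}, but phrased through the \emph{multiplicativity of the local degree} under composition rather than a crude count of preimages. Recall that for an orientation preserving branched covering $g$ between surfaces and a point $p$, the local degree $\deg_p(g)$ (the number of preimages inside a small neighbourhood of $p$ of a generic nearby point) is a positive integer, is bounded above by the global degree of $g$, and behaves multiplicatively: $\deg_p(h\circ g)=\deg_{g(p)}(h)\cdot\deg_p(g)$. The key structural input is that $\pi$ has global degree $2$, so its local degree at any point is either $1$ (at a regular point) or $2$ (at a branch point), while $\tilde f$, being a genuine covering map, is a local homeomorphism and hence has local degree $1$ everywhere.

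First I would fix an arbitrary $x\in\pi(Crit(\pi))$. Since $\pi$ has degree $2$, a critical value has a single preimage, which is a branch point; call it $\tilde x$, so that $\pi^{-1}(x)=\{\tilde x\}$ with $\deg_{\tilde x}(\pi)=2$. Next I would evaluate the two sides of the defining relation $f\pi=\pi\tilde f$ at $\tilde x$. On the left, multiplicativity gives $\deg_{\tilde x}(f\circ\pi)=\deg_{x}(f)\cdot\deg_{\tilde x}(\pi)=2\,\deg_x(f)$. On the right, since $\tilde f$ is a covering map we have $\deg_{\tilde x}(\tilde f)=1$, whence $\deg_{\tilde x}(\pi\circ\tilde f)=\deg_{\tilde f(\tilde x)}(\pi)$.

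Equating the two expressions yields $2\,\deg_x(f)=\deg_{\tilde f(\tilde x)}(\pi)$. Now the right-hand side is at most the global degree of $\pi$, namely $2$, while the left-hand side is at least $2$ because $\deg_x(f)\ge 1$. This forces $\deg_x(f)=1$, i.e. $x$ is not a critical point of $f$ (as a byproduct one also reads off $\deg_{\tilde f(\tilde x)}(\pi)=2$, so $\tilde f$ carries the branch point $\tilde x$ to another branch point, consistent with $\tilde f$ permuting the torsion points). Since $x\in\pi(Crit(\pi))$ was arbitrary, we conclude $Crit(f)\cap\pi(Crit(\pi))=\emptyset$, as claimed.

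The only delicate point, and the place I would be most careful, is the unramifiedness of $\tilde f$: the entire argument hinges on $\tilde f$ contributing local degree exactly $1$, so that it cannot absorb the ramification of $\pi$ at $\tilde x$ and the factor of $2$ coming from $\deg_{\tilde x}(\pi)$ must be balanced on the other side by the bounded quantity $\deg_{\tilde f(\tilde x)}(\pi)\le 2$. I would therefore make explicit that a covering map is by definition a local homeomorphism, and state the chain rule for local degrees together with the bound ``local degree $\le$ global degree'' as the two facts being used, rather than treating them as obvious.
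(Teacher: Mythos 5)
Your proof is correct and takes essentially the same route as the paper: the paper also compares local degrees of the two sides of $f\pi=\pi\tilde f$ at a branch point of $\pi$, arguing by contradiction that $f\pi$ would be more than $2:1$ there while $\pi\tilde f$ is at most $2:1$ because $\tilde f$ is a local homeomorphism. Your version merely makes the multiplicativity of local degree and the bound $\deg_{\tilde f(\tilde x)}(\pi)\le 2$ explicit, turning the contradiction into a direct computation.
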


\begin{proof} If $x\in Crit (\pi)$, then $\pi$ is $2:1$ at $x$. Therefore, if $\pi(x)$ is a critical point of $f$, $f\pi$ is $k:1$ at $x$, where $k>2$. But
$f\pi= \pi\tilde f$ that is at most $2:1$ at $x$.

\end{proof}

\begin{lemma}\label{inv} Let $f$ be a Latt\`es map.  Then, the critical values of $\pi$ are $f$-invariant: $f(\pi(Crit (\pi)))\subset \pi (Crit (\pi))$.

\end{lemma}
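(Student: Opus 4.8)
The plan is to work upstairs on $\T^2$ with the deck involution $\iota(x)=-x$, whose fixed point set is exactly the four $2$-torsion points, that is $\fix(\iota)=Crit(\pi)$, and whose quotient is $\pi$. I would first record the tautology $\pi\iota=\pi$ (the defining relation of the quotient) and combine it with $\pi\tilde f=f\pi$ to obtain
\[
\pi\,(\tilde f\iota)=(\pi\tilde f)\iota=(f\pi)\iota=f(\pi\iota)=f\pi=\pi\tilde f,
\]
so that $\tilde f\iota$ and $\tilde f$ agree after postcomposing with $\pi$.

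The heart of the argument is to promote this to the genuine equivariance $\tilde f\iota=\iota\tilde f$. Since every fibre of $\pi$ has the form $\{y,\iota y\}$, the identity above says that for each $x$ one has $\tilde f(\iota x)\in\{\tilde f(x),\iota\tilde f(x)\}$; the two options coincide only where $\tilde f(x)\in\fix(\iota)$, which is a finite set because $\tilde f$ is a finite covering. On the connected complement of that finite locus the two sheets of $\pi$ stay separated, so by continuity a single alternative holds globally, i.e. either $\tilde f\iota=\tilde f$ or $\tilde f\iota=\iota\tilde f$ on all of $\T^2$. I would then exclude the first possibility: near a point $c\in\fix(\iota)$ the involution $\iota$ acts as $x\mapsto -x$, a nontrivial rotation fixing only $c$, so $\tilde f\iota=\tilde f$ would force $\tilde f(x)=\tilde f(\iota x)$ with $x\neq\iota x$ on a punctured neighbourhood of $c$, contradicting that the covering $\tilde f$ is a local homeomorphism. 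Hence $\tilde f\iota=\iota\tilde f$.

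Granting equivariance, the conclusion drops out immediately: for $c\in Crit(\pi)=\fix(\iota)$ we get $\tilde f(c)=\tilde f(\iota c)=\iota\tilde f(c)$, whence $\tilde f(c)\in\fix(\iota)=Crit(\pi)$, and applying $\pi$ together with $\pi\tilde f=f\pi$ yields $f(\pi(c))=\pi(\tilde f(c))\in\pi(Crit(\pi))$, which is exactly the claimed inclusion. I expect the main obstacle to be precisely the middle step: passing from the pointwise dichotomy to a global one requires the connectedness of $\T^2$ minus the finite set where the sheets collide, and discarding the degenerate case $\tilde f\iota=\tilde f$ relies essentially on $\tilde f$ being unramified, which is where the hypothesis that $\tilde f$ is a covering (rather than a general branched self-map of $\T^2$) is genuinely used.
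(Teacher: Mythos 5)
Your proof is correct, but it takes a genuinely different route from the paper's. The paper argues locally and leans on its preceding lemmas: given a critical value $x=\pi(z)$ of $\pi$, Lemma \ref{crit} guarantees that $x$ is not a critical point of $f$, hence $f\pi$ is locally $2:1$ at the branch point $z$; writing $f\pi=\pi\tilde f$ and using that the covering $\tilde f$ is a local homeomorphism forces $\pi$ to be locally $2:1$ at $\tilde f(z)$, i.e. $\tilde f(z)\in Crit(\pi)$, so $f(x)=\pi(\tilde f(z))$ is again a critical value of $\pi$ --- a three-line multiplicity count. You instead establish the global equivariance $\tilde f\iota=\iota\tilde f$ with the deck involution $\iota(x)=-x$: the pointwise dichotomy $\tilde f(\iota x)\in\{\tilde f(x),\iota\tilde f(x)\}$, the connectedness of $\T^2$ minus the finite set $\tilde f^{-1}(\fix(\iota))$ (where the two closed coincidence sets are disjoint, so each is open and closed there), and the exclusion of $\tilde f\iota=\tilde f$ by local injectivity of $\tilde f$ at a point of $\fix(\iota)$; invariance of $\fix(\iota)=Crit(\pi)$ under $\tilde f$ then drops out, and applying $\pi$ gives the claim. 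Your argument is longer but self-contained: it uses neither Lemma \ref{ram} nor Lemma \ref{crit}, and it proves a strictly stronger structural fact --- any torus lift of a Latt\`es map commutes with the deck involution --- from which Lemma \ref{crit} can also be recovered (near a branch point, $f$ is the quotient of the local homeomorphism $\tilde f$ by compatible involutions, hence locally injective). The paper's argument is shorter but is tied to the chain of preceding lemmas; both proofs use in an essential way that $\tilde f$ is an unramified covering.
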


\begin{proof}  Let $x$ be a critical value of $\pi$. By the previous lemma $x$ is not a critical point for $f$,
then $f\pi$ is locally $2:1$ at $z=\pi^{-1}(x)$.  As $f\pi=\pi\tilde f$, $\tilde f(z)$ must be a critical point of $\pi$, that is $\pi\tilde f(z)=f\pi(z)=f(x)$ is a critical value of
$\pi$.

\end{proof}

As the critical values of $\pi$ is a set of four points, these points must be $f$-periodic. By Lemma \ref{ram}, the critical points of $f$ are pre-periodic. This already imposes a
restriction on our family of examples: they are all Thurston maps (orientation preserving branched coverings of the
sphere onto itself such that the postcritical set is finite). Furthermore, the following holds:

\begin{lemma} Let $f$ be a Latt\`es map.  Then, the critical points of $f$ are not periodic.

\end{lemma}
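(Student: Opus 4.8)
The plan is to argue by contradiction, assuming some critical point $c$ of $f$ is periodic, and to show that this forces $c$ into the set $\pi(Crit(\pi))$, which is forbidden by Lemma \ref{crit}. The three preceding lemmas assemble into this contradiction almost immediately, so I expect the proof to be very short: it is purely a matter of chaining Lemma \ref{ram}, Lemma \ref{inv} and Lemma \ref{crit} together.

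First I would suppose, for contradiction, that $c\in Crit(f)$ satisfies $f^n(c)=c$ for some integer $n\geq 1$. Since $c$ is a critical point of $f$, the image $f(c)$ is by definition a critical value of $f$, and Lemma \ref{ram} then places $f(c)$ inside $\pi(Crit(\pi))$. Next I would invoke Lemma \ref{inv}, which states that $\pi(Crit(\pi))$ is forward invariant under $f$. Iterating this invariance, every forward image $f^k(c)$ with $k\geq 1$ lies in $\pi(Crit(\pi))$; in particular the periodicity relation $c=f^n(c)$ with $n\geq 1$ yields $c\in \pi(Crit(\pi))$. But $c$ is a critical point of $f$, so this contradicts Lemma \ref{crit}, which asserts $Crit(f)\cap \pi(Crit(\pi))=\emptyset$. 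Hence no critical point of $f$ can be periodic.

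I do not anticipate a genuine obstacle beyond correctly following the forward orbit of $c$; the only point deserving a moment of care is the fixed-point case $n=1$, but it is handled identically, since then $f(c)=c\in \pi(Crit(\pi))$ already contradicts Lemma \ref{crit}. Combined with the earlier observation that the four critical values of $\pi$ are $f$-periodic and the critical points of $f$ are therefore strictly pre-periodic, this shows that the critical points of $f$ fall onto the periodic set $\pi(Crit(\pi))$ only after a nontrivial number of iterates, never returning to themselves.
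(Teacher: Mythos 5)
Your proof is correct and is essentially the paper's argument: the paper's proof is just the one-line remark that the lemma follows immediately from Lemmas \ref{crit} and \ref{inv} (with Lemma \ref{ram} having already been invoked in the surrounding text to place the forward orbit of a critical point inside $\pi(Crit(\pi))$), which is precisely the chain you spell out. The only difference is that you make the role of Lemma \ref{ram} explicit, which the paper leaves implicit.
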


\begin{proof} This follows immmediately from lemmas \ref{crit} and \ref{inv}.

\end{proof}

As a consequence, Latt\`es maps cannot be topological polinomials, that is having a point $\infty \in S^2$ such that $f^{-1}(\infty)=\infty$. In particular, the question
as to whether there exists a topological polinomial supporting a completely invariant indecomposable continuum  remains open.\\

Regarding the growth rate inequality, one immediately obtains from Theorem 1.2 page 618 of \cite{bfgj}:

\begin{lemma} Let $f$ be a Latt\`es map such that the induced map on homology of its lift $\tilde f$ to the torus, $\tilde f_*:H_1(\T^2,\Z)\to H_1(\T^2,\Z)$ is hyperbolic.  Then,
$f$ satisfies the growth rate inequality.
\end{lemma}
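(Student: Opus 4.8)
The plan is to transfer the count of periodic points from the torus, where it is governed by the linear algebra of $A:=\tilde f_*$, down to the sphere. Let $d$ denote the degree of $\tilde f$, so that $d=|\det A|$. Since $\pi\tilde f=f\pi$ and degree is multiplicative, $\deg(\pi)\deg(\tilde f)=\deg(f)\deg(\pi)$ with $\deg\pi=2$, giving $\deg f=\deg\tilde f=d$; thus the inequality to prove reads $\limsup_n\frac1n\log N_nf\geq\log d$. First I would relate $N_nf$ to $N_n\tilde f$. Iterating the semiconjugacy gives $\pi\tilde f^n=f^n\pi$, so $\pi$ carries $\fix(\tilde f^n)$ into $\fix(f^n)$. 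Because $\pi$ is at most $2$-to-$1$ (its fibers over the four critical values are single points and all other fibers have two points), the image of $\fix(\tilde f^n)$ contains at least half as many points as $\fix(\tilde f^n)$, whence
\[
N_nf\geq \tfrac12\,N_n\tilde f .
\]

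Next I would bound $N_n\tilde f$ from below using fixed point theory on the torus. The Lefschetz number of $\tilde f^n$ is $L(\tilde f^n)=\det(I-A^n)$, and since the torus is a space on which the Nielsen number equals the absolute value of the Lefschetz number, every map in the homotopy class of $\tilde f^n$ has at least $|\det(I-A^n)|$ fixed points; in particular $N_n\tilde f=\#\fix(\tilde f^n)\geq |\det(I-A^n)|$. Hyperbolicity of $A$ means its eigenvalues $\lambda_1,\lambda_2$ satisfy $|\lambda_i|\neq1$, so $1$ is never an eigenvalue of $A^n$ and $\det(I-A^n)\neq0$ for all $n$. Writing $\det(I-A^n)=\prod_i(1-\lambda_i^n)$, the factors with $|\lambda_i|>1$ contribute $|\lambda_i|^n(1+o(1))$ while those with $|\lambda_i|<1$ remain bounded, so
\[
\lim_n\frac1n\log|\det(I-A^n)|=\sum_{|\lambda_i|>1}\log|\lambda_i| .
\]

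To finish I would note that $|\lambda_1\lambda_2|=|\det A|=d>1$ forces at least one eigenvalue outside the unit circle and forbids both from lying inside it. If both satisfy $|\lambda_i|>1$, the sum above equals $\log(|\lambda_1||\lambda_2|)=\log d$; if only one does, say $|\lambda_1|>1>|\lambda_2|$, then $|\lambda_1|=d/|\lambda_2|>d$ and the sum equals $\log|\lambda_1|>\log d$. In either case $\sum_{|\lambda_i|>1}\log|\lambda_i|\geq\log d$, and combining the displays yields
\[
\limsup_n\frac1n\log N_nf\geq\lim_n\frac1n\log\bigl(\tfrac12|\det(I-A^n)|\bigr)=\sum_{|\lambda_i|>1}\log|\lambda_i|\geq\log d,
\]
which is the growth rate inequality for $f$. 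The step I expect to demand the most care is the lower bound $N_n\tilde f\geq|\det(I-A^n)|$: without assuming $\tilde f$ linear one cannot read off fixed points from an explicit formula and must instead invoke the homotopy invariance of the Nielsen/Lefschetz number on the torus, using hyperbolicity of $A$ to keep $L(\tilde f^n)$ nonzero so that the bound is nontrivial for every $n$.
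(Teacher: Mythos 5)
Your proof is correct and takes essentially the same route as the paper: the paper's proof is a one-line appeal to the fact that the hyperbolic lift $\tilde f$ satisfies the growth rate inequality on the torus (citing Theorem 1.2, p.\ 618 of the Handbook of Topological Fixed Point Theory), which is precisely the Nielsen--Lefschetz bound $N_n\tilde f\geq|\det(I-A^n)|$ that you derive, with the descent to $S^2$ left implicit. Your write-up simply fills in the two steps the paper treats as immediate: the transfer $N_nf\geq\tfrac12 N_n\tilde f$ through the at-most-$2$-to-$1$ projection $\pi$, and the eigenvalue estimate $\lim_n\tfrac1n\log|\det(I-A^n)|\geq\log d$.
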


Note that the hypothesis on the homology class of the lift is necessary, as the product of $z^2$ times an irrational rotation acting on $S^1\times S^1$ shows.\\

We finish by commenting that these examples are Thurston maps with signature $(2,2,2,2)$ which lie in the family of Thurston maps with non-hyperbolic orbifolds.  The growth
rate inequality problem for this larger family was studied by the authors in \cite{iprx2}. We showed that if $f$
is a  Thurston map with non hyperbolic orbifold, then either the growth rate inequality holds for $f$ or
$f$ has exactly two  critical points which are fixed and totally invariant.\\

{\bf Acknowledgments.} The authors are thankful to Professor Jan P. Boro\'nski for pointing us to Gammon's example and related work, and for several useful discussions over the years.\\

\end{document}